\newtheorem{theorem}{Theorem}[section]
\newtheorem{lemma}[theorem]{Lemma}
\newtheorem{proposition}[theorem]{Proposition}
\newtheorem{corollary}[theorem]{Corollary}
\theoremstyle{definition}
\theoremstyle{remark}
\newtheorem{remark}[theorem]{Remark}
\numberwithin{equation}{section}
\def\R{{\mathbb R}}
\def\intslash{\rlap{\kern  .32em $\mspace {.5mu}\backslash$ }\int}
\def\qsl{{\rlap{\kern  .32em $\mspace {.5mu}\backslash$ }\int_{Q_x}}}
\def\emph#1{{\it #1 }}
\def\lc{\lesssim}
\def\alp{\alpha}
\def\del{\delta}             
\def\eps{\varepsilon}
            \def\Lam{\Lambda}
              \def\Om{\Omega}
\def\fr{\frac}
\newcommand{\Be}{\begin{equation}}
\newcommand{\Ee}{\end{equation}}
\newcommand{\Bes}{\begin{equation*}}
\newcommand{\Ees}{\end{equation*}}
\newcommand{\Bsp}{\begin{split}}
\newcommand{\Esp}{\end{split}}
\newcommand{\Bm}{\begin{multline}}
\newcommand{\Em}{\end{multline}}
\newcommand{\Bea}{\begin{eqnarray}}
\newcommand{\Eea}{\end{eqnarray}}
\newcommand{\Beas}{\begin{eqnarray*}}
\newcommand{\Eeas}{\end{eqnarray*}}
\newcommand{\Benu}{\begin{enumerate}}
\newcommand{\Eenu}{\end{enumerate}}
\newcommand{\Bi}{\begin{itemize}}
\newcommand{\Ei}{\end{itemize}}
\begin{document}

\title[Discrete Fourier restriction]{A note on the discrete Fourier restriction problem}

\author{Xudong Lai}
\address{Xudong Lai(Corresponding author): Institute for Advanced Study in Mathematics, Harbin Institute of Technology, Harbin, 150001, People's Republic of China.\endgraf
}
\email{xudonglai@mail.bnu.edu.cn}

\thanks{The work is supported by NSFC (No.11371057, No.11471033, No.11571160), SRFDP (No.20130003110003), the Fundamental Research Funds for the Central Universities (No.2014KJJCA10) and the China Scholarship Council (No. 201506040129).}

\author{Yong Ding}
\address{Yong Ding: School of Mathematical Sciences, Beijing Normal University, Beijing 100875, People's Republic of China }

\email{dingy@bnu.edu.cn}

\subjclass[2010]{42B05,11L07}



\keywords{Discrete Fourier restriction, exponential sums.}

\begin{abstract}
In this paper, we establish a general discrete Fourier restriction theorem. As an application, we make some progress on the discrete Fourier restriction associated with KdV equation.
\end{abstract}

\maketitle

\section{Introduction}
Recently, the Fourier restriction problem has been widely studied (for example see \cite{Ste93}, \cite{Tao04}, \cite{BCT06}, \cite{BG11}, \cite{BD15}). In this paper, we investigate the discrete Fourier restriction problems. Let us first see the discrete Fourier restriction associated with KdV equations. More precisely, we are going to seek the best constant $A_{p,N}$ satisfying
\Be
\sum_{|n|\leq N}|\hat{f}(n,n^3)|^2\leq A_{p,N}\|f\|_{L^{p'}(\mathbb{T}^2)}^2
\Ee
where $f$ is a periodic function on $\mathbb{T}^2$, $\hat{f}$ is the Fourier transform of $f$ on $\mathbb{T}^2$, i.e. $\hat{f}(\xi)=\int_{\mathbb{T}^2}e^{-2\pi ix\cdot \xi}f(x)dx$, $N$ is a sufficient large integer, $p\geq 2$ and $\fr{1}{p}+\fr{1}{p'}=1$.
For any $\eps>0$, Bourgain \cite{Bou93} showed that $A_{6,N}\leq N^\eps$. Later Hu and Li \cite{HL13} proved that $A_{p,N}\lc_{\eps} N^{1-\fr{8}{p}+\eps}$ for $p\geq14$.

Bourgain \cite{Bou93} and Hu and Li \cite{HL13} conjectured that
\begin{equation}
A_{p,N}\leq \left\{\begin{array}{cccc}
      &C_p&\ for &2\leq p<8, \\
      &C_{\eps,p}N^{1-\fr{8}{p}+\eps}&\ for &p\geq8.
     \end{array}\right.
\end{equation}
Clearly, $p=8$ is the critical number. In this paper, we will make a slight progress of this conjecture. We will show that $A_{p,N}\lc_{\eps}N^{1-\fr{8}{p}+\eps}$ for $p\geq12$.

It is easy to see that the study of $A_{p,N}$ is equivalent to the periodic Strichartz inequality associated with KdV equation:
\Be\label{e:13strikdv}
\Big\|\sum_{|n|\leq N}a_ne^{2\pi i(xn+tn^3)}\Big\|_{L_{x,t}^p(\mathbb{T}^2)}\leq K_{p,N}\Big(\sum_{|n|\leq N}|a_n|^2\Big)^{\fr{1}{2}}.
\Ee

In fact, we have $A_{p,N}\approx K_{p,N}^2$ by using the dual method. Later while considering the Cauchy problem of the fifth-order KdV-type equations, Hu and Li \cite{HL15} studied the following Strichartz inequality
\Be\label{e:13strikdvtype}
\Big\|\sum_{|n|\leq N}a_ne^{2\pi i(xn+tn^k)}\Big\|_{L_{x,t}^p(\mathbb{T}^2)}\leq \mathcal{K}_{p,N}\Big(\sum_{|n|\leq N}|a_n|^2\Big)^{\fr{1}{2}},
\Ee
where $k$ is a positive integer and $k\geq 2$.
They \cite{HL15} proved that $\mathcal{K}_{6,N}\lc N^{\eps}$ if $k$ is odd and
$\mathcal{K}_{p,N}\lc_{\eps}N^{\fr{1}{2}(1-\fr{2(k+1)}{p})+\eps}$
for $p\geq p_0$ where
\Bes
p_0=\left\{\begin{array}{cccc}
(k-2)2^k+6\ \ &\text{if $k$ is odd},\\
(k-1)2^k+4\ \ &\text{if $k$ is even}.
\end{array}
\right.
\Ees

In \eqref{e:13strikdv} and \eqref{e:13strikdvtype}, the discrete Fourier restriction problems are studied  in two dimensions when the Fourier transform is indeed restricted to the curve $(n,n^3)$ and $(n,n^k)$. It is natural to consider a similar problem for higher dimensions when the Fourier transform is restricted to the general curve $(n^{k_1},\cdots,n^{k_d})$, where $k_1,\cdots,k_d$ are positive integers. Let $K_{p,d,N}$ be the best constant in the following inequality
\Be\label{e:13kpdng}
\Big\|\sum_{|n|\leq N}a_ne^{2\pi i(\alp_1n^{k_1}+\cdots+\alp_dn^{k_d})}\Big\|_{L^p(\mathbb{T}^d)}\leq K_{p,d,N}\Big(\sum_{|n|\leq N}|a_n|^2\Big)^{\fr{1}{2}}.
\Ee

Our main result in the present paper is as follows.
\begin{theorem}\label{t:13}
Let $a_n$ be a complex number for all $|n|\leq N$. Let $d>1$ and $k_1,\cdots,k_d$ be positive integers with $1\leq k_1<\cdots<k_d=k$. Set $\mathfrak{K}=\sum_{i=1}^dk_i$. Let $K_{p,d,N}$ be defined in \eqref{e:13kpdng}. Suppose $p\geq k(k+1)$. Then for any $\eps>0$, we have
\Be\label{e:13kpdn}
K_{p,d,N}\lc_{\eps}N^{\fr{1}{2}(1-\fr{2\mathfrak{K}}{p})+\eps},
\Ee
where the implicit constant depends on $k_1,\cdots,k_d$, $p$, $\eps$, but does not depend on $N$.
\end{theorem}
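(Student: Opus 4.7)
The plan is to deduce Theorem~\ref{t:13} from the Bourgain--Demeter--Guth decoupling theorem for the moment curve $(n,n^2,\ldots,n^k)$ in $\mathbb{R}^k$: first extract a Vinogradov-type estimate on the sub-curve $(n^{k_1},\ldots,n^{k_d})$ at the critical exponent $p_0 = k(k+1)$ by a Cauchy--Schwarz step, then interpolate to all $p \geq p_0$ against the trivial $L^\infty$ bound.

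First I would set $\tilde S_N(\beta) = \sum_{|n|\leq N} a_n e^{2\pi i(\beta_1 n + \beta_2 n^2 + \cdots + \beta_k n^k)}$ for $\beta \in \mathbb{T}^k$ and invoke the Bourgain--Demeter--Guth decoupling theorem for the moment curve at its critical exponent to obtain
\[
\|\tilde S_N\|_{L^{k(k+1)}(\mathbb{T}^k)} \lesssim_{\varepsilon} N^\varepsilon \|a\|_{\ell^2}.
\]
Since $p_0 = k(k+1)$ is always even, set $p_0 = 2s$ and expand both $L^{p_0}$-norms via orthogonality. Writing
\[
R(\vec L) = \sum_{\vec n \in [-N,N]^s :\ \sum_i n_i^\ell = L_\ell\ \forall \ell} a_{n_1}\cdots a_{n_s}, \qquad \vec L \in \mathbb{Z}^k,
\]
one has $\int_{\mathbb{T}^k}|\tilde S_N|^{2s}\,d\beta = \sum_{\vec L}|R(\vec L)|^2$, and analogously $\int_{\mathbb{T}^d}|T_N|^{2s}\,d\alpha = \sum_{\vec L'}|R'(\vec L')|^2$ where $\vec L' = (L_{k_1},\ldots,L_{k_d})$ and $R'(\vec L') = \sum_{L_\ell :\ \ell \notin \{k_1,\ldots,k_d\}} R(\vec L)$ is the marginal over the suppressed coordinates.

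Next I would apply Cauchy--Schwarz to $R'$: a non-zero $R(\vec L)$ forces $|L_\ell| \leq sN^\ell$, so for each fixed $\vec L'$ the number of admissible $(L_\ell)_{\ell \notin \{k_j\}}$ is at most $\prod_{\ell \notin \{k_j\}}(2sN^\ell + 1) \lesssim N^{k(k+1)/2 - \mathfrak{K}}$, yielding
\[
|R'(\vec L')|^2 \lesssim N^{k(k+1)/2 - \mathfrak{K}}\sum_{L_\ell :\ \ell \notin \{k_j\}}|R(\vec L)|^2.
\]
Summing over $\vec L'$ and combining with the BDG input produces
\[
\int_{\mathbb{T}^d}|T_N|^{2s}\,d\alpha \lesssim N^{k(k+1)/2 - \mathfrak{K} + \varepsilon}\|a\|_{\ell^2}^{2s},
\]
which is exactly \eqref{e:13kpdn} at $p = p_0 = k(k+1)$.

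Finally, for $p > k(k+1)$ I would interpolate with the trivial bound $\|T_N\|_{L^\infty} \leq (2N+1)^{1/2}\|a\|_{\ell^2}$ via H\"older's inequality with $\theta = p_0/p$, which reproduces the stated exponent $\frac{1}{2}\bigl(1 - \frac{2\mathfrak{K}}{p}\bigr) + \varepsilon$ for all $p \geq k(k+1)$. The only substantive input is the BDG decoupling in the first step; the remaining arguments are elementary. The key technical point to verify is that the $k-d$ suppressed moments in the Cauchy--Schwarz step contribute exactly $\sum_{\ell \notin \{k_j\}}\ell = k(k+1)/2 - \mathfrak{K}$, so that the loss balances the BDG bound precisely and leaves only an $N^\varepsilon$ factor.
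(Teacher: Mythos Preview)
Your proposal is correct and follows essentially the same strategy as the paper: reduce to the critical exponent $p_0=k(k+1)$, compare the $\mathbb{T}^d$-integral for the sub-curve $(n^{k_1},\ldots,n^{k_d})$ with the $\mathbb{T}^k$-integral for the full moment curve via orthogonality, pay a factor $N^{k(k+1)/2-\mathfrak K}$ coming from the $k-d$ suppressed moments, invoke the Bourgain--Demeter--Guth bound for the moment curve, and then interpolate against the trivial $L^\infty$ estimate. The only cosmetic difference is in how the factor $N^{k(k+1)/2-\mathfrak K}$ is extracted: the paper writes $\Lambda=\sum_h\Lambda(h)$ with $\Lambda(h)$ the Fourier coefficients of the nonnegative function $|\tilde S_N|^{2u}$ and uses $|\Lambda(h)|\le\Lambda(0)$, whereas you use Parseval on $T_N^s$ and Cauchy--Schwarz on the marginal $R'(\vec L')=\sum R(\vec L)$; both yield exactly the same inequality $\int_{\mathbb T^d}|T_N|^{p_0}\lesssim N^{k(k+1)/2-\mathfrak K}\int_{\mathbb T^k}|\tilde S_N|^{p_0}$.
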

\begin{remark}
In \cite{Woo16}, T.D. Wooley adapted the efficient congruencing method to prove that \eqref{e:13kpdn} holds for $p\geq2k(k+1)$. And whenever $p>2k(k+1)$, one may take $\eps=0$ in \eqref{e:13kpdn}.
\end{remark}
\begin{remark}
In Section \ref{s:133}, we will show the bound in \eqref{e:13kpdn} is sharp up to a constant $\eps$. One may conjecture \eqref{e:13kpdn} holds for all $p\geq 2\mathfrak{K}$. Notice if $k_i=i$, $i=1,\cdots,d$, then $2\mathfrak{K}=d(d+1)$. Thus \eqref{e:13kpdn} is valid for $p\geq 2\mathfrak{K}$ in this case.
\end{remark}
By using Theorem \ref{t:13}, one could make some progresses on the previous results. Applying Theorem \ref{t:13} with $d=2$, $k_1=1$, $k_2=3$ and $d=2$, $k_1=1$, $k_2=k$(here $k\geq2$), we may obtain the following corollaries.
\begin{corollary}
Let $K_{p,N}$ be defined in \eqref{e:13strikdv}. Suppose $p\geq 12$. Then for any $\eps>0$, we get
$$K_{p,N}\lc_{\eps} N^{\fr{1}{2}(1-\fr{8}{p})+\eps},$$
where the implicit constant is independent of $N$. If $p>24$, one may take $\eps=0$.
\end{corollary}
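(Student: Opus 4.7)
The plan is essentially to recognize this corollary as a direct instance of Theorem~\ref{t:13}. Observe that the Strichartz-type inequality \eqref{e:13strikdv} is exactly the $d=2$ case of the general inequality \eqref{e:13kpdng} with the variables relabelled as $(\alpha_1,\alpha_2)=(x,t)$ and exponents $k_1=1$, $k_2=3$. Consequently, the best constant $K_{p,N}$ in \eqref{e:13strikdv} coincides with $K_{p,2,N}$ in \eqref{e:13kpdng} for this choice of parameters.

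With $k_1=1$, $k_2=3$, we have $k=k_d=3$, so the hypothesis $p\geq k(k+1)$ of Theorem~\ref{t:13} becomes $p\geq 12$, which is precisely the assumption in the statement. I would then simply compute the exponent: $\mathfrak{K}=k_1+k_2=1+3=4$, so $\tfrac{2\mathfrak{K}}{p}=\tfrac{8}{p}$, and plugging into \eqref{e:13kpdn} yields
\Be
K_{p,N}=K_{p,2,N}\lc_{\eps}N^{\fr{1}{2}(1-\fr{8}{p})+\eps},
\Ee
as required. For the final claim, the remark following Theorem~\ref{t:13} states that one may take $\eps=0$ whenever $p>2k(k+1)$; with $k=3$ this threshold is $p>24$, which matches the stated improvement.

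There is essentially no obstacle beyond checking the parameter bookkeeping: the corollary is obtained by specialization rather than by any new argument, and the dependence of the implicit constant on $k_1,k_2,p,\eps$ in Theorem~\ref{t:13} transfers directly to a dependence on $p$ and $\eps$ alone (since $k_1,k_2$ are now fixed numerical values). Hence the proof reduces to a single invocation of Theorem~\ref{t:13} together with its accompanying remark.
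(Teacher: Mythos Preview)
Your proposal is correct and matches the paper's own approach: the paper explicitly states that the corollary is obtained by applying Theorem~\ref{t:13} with $d=2$, $k_1=1$, $k_2=3$, and the $\eps=0$ claim for $p>24$ follows from the remark citing Wooley's result with $2k(k+1)=24$.
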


\begin{corollary}
Let $\mathcal{K}_{p,N}$ be defined in \eqref{e:13strikdvtype}. Suppose $p\geq k(k+1)$. Then for any $\eps>0$, we have
\Bes
\mathcal{K}_{p,N}\lc_{\eps} N^{\fr{1}{2}(1-\fr{2(k+1)}{p})+\eps},
\Ees
where the implicit constant is independent of $N$. If $p>2k(k+1)$, one may take $\eps=0$.
\end{corollary}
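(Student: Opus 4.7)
The plan is to lift the partial sum $F$ to the full moment curve in $\mathbb R^k$ and then invoke the sharp Vinogradov mean value estimate of Bourgain--Demeter--Guth (BDG). Concretely, I would introduce
\Bes
G(\bet_1,\ldots,\bet_k)=\sum_{|n|\le N}a_n\, e^{2\pi i(\bet_1 n+\bet_2 n^2+\cdots+\bet_k n^k)}
\Ees
on $\mathbb T^k$, so that $F(\alp_1,\ldots,\alp_d)=G(\bet(\alp))$, where $\bet(\alp)$ places $\alp_i$ in position $k_i$ and $0$ in the remaining $k-d$ coordinates. The improvement of Wooley's range $p\ge 2k(k+1)$ down to $p\ge k(k+1)$ should come entirely from using BDG in place of efficient congruencing; the elementary work is just to absorb the ``defect'' $\tfrac{k(k+1)}{2}-\mathfrak K$ between our system and the full Vinogradov system.

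To do so I observe that, with all other variables frozen, $G$ is a trigonometric polynomial of degree $\le N^j$ in $\bet_j$, so Nikolskii's inequality on $\mathbb T$ yields $|G|^p\lc N^j\int_{\mathbb T}|G|^p\, d\bet_j$ for every $p\ge 1$. Applying this estimate successively to each of the $k-d$ missing indices $j\in\{1,\ldots,k\}\setminus\{k_1,\ldots,k_d\}$ and then integrating in $\alp\in\mathbb T^d$ produces
\Bes
\int_{\mathbb T^d}|F(\alp)|^p\, d\alp\lc N^{k(k+1)/2-\mathfrak K}\int_{\mathbb T^k}|G(\bet)|^p\, d\bet,
\Ees
since $\sum_{j\notin\{k_1,\ldots,k_d\}}j=\tfrac{k(k+1)}{2}-\mathfrak K$. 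I would then apply the BDG mean value bound for the full moment curve: for $p\ge k(k+1)$ and $\eps>0$,
\Bes
\int_{\mathbb T^k}|G(\bet)|^p\, d\bet\lc_\eps N^{\eps}\, N^{p/2-k(k+1)/2}\Big(\sum_{|n|\le N}|a_n|^2\Big)^{p/2}.
\Ees
Multiplying the two displays, the $k(k+1)/2$ exponents cancel exactly, leaving $\int_{\mathbb T^d}|F|^p\lc_\eps N^{p/2-\mathfrak K+\eps}(\sum|a_n|^2)^{p/2}$, which is precisely \eqref{e:13kpdn} after taking $p$-th roots. For $p>k(k+1)$, one removes $\eps$ by interpolating with the trivial bound $\|G\|_\infty\le (2N+1)^{1/2}(\sum|a_n|^2)^{1/2}$.

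The main obstacle is the careful accounting in the iterated Nikolskii step: the bandlimit in each frozen direction $\bet_j$ must be exactly $N^j$ so that the product of the resulting constants gives precisely $N^{k(k+1)/2-\mathfrak K}$, cancelling the BDG exponent on the nose. Beyond that, the proof is essentially a combination of the lifting identity $F(\alp)=G(\bet(\alp))$ with the sharp decoupling bound, and no additional harmonic analysis is needed.
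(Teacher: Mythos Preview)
Your main argument is correct and is essentially the paper's approach in disguise: both lift $F$ to the full moment curve $G$ on $\mathbb{T}^k$, pay the factor $N^{k(k+1)/2-\mathfrak K}$ for the $k-d$ missing coordinates, and then invoke the Bourgain--Demeter--Guth bound for $G$. The execution differs in how that factor is paid. The paper first reduces by interpolation to the even critical exponent $p_k=k(k+1)$, expands $|G|^{p_k}$ by orthogonality, and uses the elementary bound $|\Lambda(h)|\le\Lambda(0)$ for the Fourier coefficients of the nonnegative function $|G|^{2u}$; the count of relevant $h$ is $\lc N^{\sum l_j}=N^{k(k+1)/2-\mathfrak K}$. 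Your Nikolskii step does the same job without the even-integer restriction and hence without the preliminary reduction to $p_k$; this is arguably cleaner, and the bandlimit bookkeeping you flag (degree $\le N^j$ in $\beta_j$) is exactly right.

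There is, however, a genuine gap in your final sentence. Interpolation with the trivial $L^\infty$ bound does \emph{not} remove the $\eps$: if $K_{p_k,d,N}\lc_\eps N^{a+\eps}$ and $K_{\infty,d,N}\lc N^{1/2}$, then for $\tfrac1p=\tfrac{\theta}{p_k}$ one gets $K_{p,d,N}\lc_\eps N^{\theta a+(1-\theta)/2+\theta\eps}$, and the loss $\theta\eps>0$ persists. So your claim that $\eps$ disappears for all $p>k(k+1)$ is unjustified (and in fact stronger than the corollary, which only asserts this for $p>2k(k+1)$). In the paper this $\eps$-free range is not proved by the present method at all; it is quoted from Wooley's efficient congruencing result (Remark~1.2). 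You should either cite that, or drop the $\eps$-removal claim.
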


By setting $d=k$, $k_i=i$, for $i=1,\cdots,k$, $a_n=1$ for $n=1,\cdots,N$, $a_n=0$ for $n=0,-1,\cdots,-N$ in Theorem \ref{t:13}, one obtain
\Be\label{e:13vino}
\int_{\mathbb{T}^k}\Big|\sum_{n=1}^Ne^{2\pi i(\alp_1 n+\cdots+\alp_kn^k)}\Big|^pd\alp\lc_{\eps}N^{p-k(k+1)+\eps}
\Ee
for $p\geq k(k+1)$, which is the Vinogradov's mean value theorem proved by Bourgain, Demeter and Guth \cite{BDG16} recently. \eqref{e:13kpdng} can be regarded as a weighted version of \eqref{e:13vino} and \eqref{e:13kpdng} is apparently harder than \eqref{e:13vino}. Notice that the curve $(t^{{k_1}},t^{k_2},\cdots,t^{k_d})$ may be degenerate, for example the curve $(t,t^3)$ has zero curvature at point $(0,0)$. It seems to be difficulty to use the method developed in \cite{BD15} and \cite{BDG16} to prove \eqref{e:13kpdn} for $p\geq 2\mathfrak{K}$, since what they deal with are hypersurface with nonzero Gaussian curvature or nondegenerate curve. The proof of Theorem \ref{t:13} is based on a key lemma from \cite{BDG16}. Bourgain \emph{et al.} \cite{BDG16} used this lemma to prove \eqref{e:13vino}.

Throughout this paper, the letter $C$ stands for a positive constant and $C_a$ denotes a constant depending on $a$. $A\lc_{\eps} B$ means $A\leq C_{\eps}B$ for some constant $C_\eps$. $A\approx B$ means that $A\lc B$ and $B\lc A$. For a set $E\subset\R^d$, we denote Lebesgue measure of $E$ by  $|E|$.
\vskip1cm

\section {Proof of Theorem \ref{t:13}}\label{s:132}
Before giving the proof of Theorem \ref{t:13}, we first introduce some lemmas.

\begin{lemma}[see Theorem 4.1 in \cite{BDG16}]\label{l:13BDG}
For each $1\leq n\leq N$, let $t_n$ be a point in $(\fr{n-1}{N},\fr{n}{N}]$. Suppose $B_R$ is a ball in $\R^d$ with center $c_B$ and radius $R$. Define $w_{B_R}(x)=\Big(1+\fr{|x-c_B|}{R}\Big)^{-200}$. Then for each $R\gtrsim N^d$, each ball $B_R$ in $\R^d$, each $a_n\in \mathbb{C}$, each $p\geq2$ and $\eps>0$, we have
\begin{equation}
\begin{split}
\Big(\fr{1}{|B_R|}&\int\Big|\sum_{n=1}^Na_ne^{2\pi i(x_1t_n+\cdots+x_dt_n^d)}\Big|^pw_{B_R}(x)dx\Big)^{\fr{1}{p}}\\
&\lc_{\eps}\Big(N^\eps+N^{\fr{1}{2}(1-\fr{d(d+1)}{p})+\eps}\Big)\Big(\sum_{n=1}^ {N}|a_n|^2\Big)^{\fr{1}{2}},
\end{split}
\end{equation}
where the implicit constant does not depend on $N$, $R$ and $a_n$.
\end{lemma}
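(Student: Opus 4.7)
The plan is to apply Lemma \ref{l:13BDG} in dimension $k$ (rather than $d$), obtain a torus $L^p$-bound on $\mathbb{T}^k$ for the full Vinogradov-type exponential sum in all powers $1,2,\dots,k$, and then recover the restriction to the sparse subset $\{k_1,\dots,k_d\}$ of exponents via a Nikolskii/Bernstein-type inequality in the ``missing'' variables. Introduce the auxiliary trigonometric polynomial
\[
G(y_1,\dots,y_k):=\sum_{n=1}^N a_n\, e^{2\pi i(y_1 n+y_2 n^2+\cdots+y_k n^k)},\qquad y\in\mathbb{T}^k.
\]
Splitting the range $|n|\leq N$ into $n>0$, $n=0$ (a constant contribution bounded by $|a_0|\leq\|a\|_2$), and $n<0$ (handled by the measure-preserving change of variables $n\mapsto -n$, $\alpha_j\mapsto(-1)^{k_j}\alpha_j$ on $\mathbb{T}^d$) reduces matters to this one-sided sum. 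Setting $S=\{k_1,\dots,k_d\}\subseteq\{1,\dots,k\}$ with complement $S^c$, and writing $y=(y_S,y_{S^c})$, one has $h(\alpha)=G(y_S=\alpha,\,y_{S^c}=0)$: the function to bound is the restriction of $G$ to a coordinate subspace.

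The first step is to establish the torus bound
\[
\|G\|_{L^p(\mathbb{T}^k)}\lc_\eps N^{\fr{1}{2}-\fr{K}{p}+\eps}\|a\|_2,\qquad K:=\tfrac{k(k+1)}{2},
\]
valid for $p\geq k(k+1)$, starting from Lemma \ref{l:13BDG} in dimension $k$ with $t_n=n/N$. The change of variables $y_i=x_i/N^i$ converts the lemma's sum into $G(y)$ and introduces a Jacobian $N^{K}$. I would then choose $c_B=(N^k/2,\dots,N^k/2)$ and $R\approx N^k$ so that the cube $[0,N^k]^k\subset B_R$ and the weight $w_{B_R}$ is bounded below there by a dimensional constant; this cube corresponds in $y$-coordinates to $\prod_{i=1}^k[0,N^{k-i}]$, a box of volume $N^{k^2-K}$. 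Because $G$ is $\mathbb{Z}^k$-periodic, the integral of $|G|^p$ over this box equals $N^{k^2-K}\|G\|_{L^p(\mathbb{T}^k)}^p$, and collecting the exponents of $N$ coming from $|B_R|\approx N^{k^2}$, the Jacobian, and the right-hand side of the lemma produces exactly the advertised exponent $\fr{1}{2}-\fr{K}{p}$.

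In the second step I extract the slice $h(\alpha)=G(\alpha,0)$ from $G$ using Nikolskii's inequality: for a univariate trigonometric polynomial $\phi$ on $\mathbb{T}$ of spectral degree at most $M$ one has $|\phi(0)|^p\lc M\int_\mathbb{T}|\phi|^p$. For each $i\in S^c$, with all other coordinates fixed, $y_i\mapsto G$ is a trig polynomial of spectral degree at most $N^i$. Applying Nikolskii sequentially over $i\in S^c$ and integrating $y_S\in\mathbb{T}^d$ yields
\[
\|h\|_{L^p(\mathbb{T}^d)}^p\lc \Big(\prod_{i\in S^c}N^i\Big)\|G\|_{L^p(\mathbb{T}^k)}^p = N^{K-\mathfrak{K}}\|G\|_{L^p(\mathbb{T}^k)}^p\lc_\eps N^{p/2-\mathfrak{K}+\eps p}\|a\|_2^p,
\]
since $\sum_{i\in S^c}i=K-\mathfrak{K}$. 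Taking $p$-th roots gives \eqref{e:13kpdn}. The main technical obstacle is the first step: selecting $B_R$ and organizing the periodization so that no power of $N$ is wasted in going from the weighted $\mathbb{R}^k$-estimate of Lemma \ref{l:13BDG} to a clean estimate on the unit torus $\mathbb{T}^k$. The remaining ingredients---the Nikolskii extraction, the reduction to $n>0$, and the handling of $n=0$---are standard.
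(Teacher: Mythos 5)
Your proposal does not prove the statement it was assigned. The statement is Lemma \ref{l:13BDG} itself --- the weighted $L^p$ estimate for exponential sums along the moment curve $(t_n,\dots,t_n^d)$, i.e.\ Theorem 4.1 of \cite{BDG16}, which is the decoupling form of the Vinogradov main conjecture. Your argument opens with ``apply Lemma \ref{l:13BDG} in dimension $k$,'' so it assumes exactly the result to be proved; read as a proof of the lemma it is circular. This is not a repairable oversight: the lemma is the deep input of the entire paper, established in \cite{BDG16} via $l^2$-decoupling, multilinear Kakeya and induction on scales, and it cannot be recovered from periodization, Nikolskii inequalities, or any of the elementary manipulations you employ. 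The paper itself offers no proof of the lemma either --- it imports it verbatim from \cite{BDG16} --- so there is nothing in the source to compare your argument against except the citation.

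What you have actually written is a proof sketch of the paper's main result, Theorem \ref{t:13} (inequality \eqref{e:13kpdn}), taking the lemma as given. On that different task your argument is essentially sound and diverges from the paper's in a mildly interesting way: your first step (periodizing the weighted ball estimate with $t_n=n/N$, $R\approx N^k$, and the box $\prod_{i=1}^k[0,N^{k-i}]$ to obtain the $\mathbb{T}^k$ bound for the full sum $G$) coincides with the paper's Lemma \ref{l:13von}; but where the paper then restricts to the even exponent $p_k=k(k+1)$, expands the $2u$-th power, and bounds the weighted solution count by $|\Lambda|\le (2u)^s N^{l_1+\cdots+l_s}\Lambda(0)$, you extract the slice $y_{S^c}=0$ by iterated Nikolskii inequalities of degree $N^i$ in each missing variable. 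The two give the same exponent since $\sum_{i\in S^c} i=\tfrac{1}{2}k(k+1)-\mathfrak{K}$, and your route has the minor advantage of working for all real $p\ge k(k+1)$ directly, without the Riesz--Thorin interpolation step. None of this, however, addresses the lemma you were asked to prove.
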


\begin{lemma}\label{l:13von}
Suppose $a_n\in\mathbb{C}$ and $p\geq2$. Then for any $\eps>0$, we get
\begin{equation}\label{e:13poly}
\begin{split}
\Big(\int_{\mathbb{T}^d}&\Big|\sum_{|n|\leq N}a_ne^{2\pi i(x_1n+x_2n^2+\cdots+x_dn^d)}\Big|^pdx\Big)^{\fr{1}{p}}\\
&\lc_{\eps}\Big(N^\eps+N^{\fr{1}{2}(1-\fr{d(d+1)}{p})+\eps}\Big)\Big(\sum_{|n|\leq N}|a_n|^2\Big)^{\fr{1}{2}},
\end{split}
\end{equation}
where the implicit constant is independent of $N$ and $a_n$.
\end{lemma}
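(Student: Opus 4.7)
I would deduce Lemma~\ref{l:13von} from Lemma~\ref{l:13BDG} by rescaling $\mathbb{T}^d$ to a long box in $\R^d$ and then periodizing, so that the weighted $\R^d$-average in Lemma~\ref{l:13BDG} reproduces the $L^p$-average on a fundamental cell of the period lattice of the rescaled exponential sum. As a first reduction, I split $\sum_{|n|\leq N}=a_0+\sum_{n=1}^N+\sum_{n=-N}^{-1}$; the $n=0$ term contributes $|a_0|\leq(\sum|a_n|^2)^{1/2}$, while for the negative piece, writing $n=-m$ and then performing the substitution $x_j\mapsto -x_j$ for every odd $j$ (an isometry of $L^p(\mathbb{T}^d)$, using $(-m)^j=(-1)^jm^j$) converts it into a sum of the same shape as the positive piece. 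Hence it suffices to bound $F_+(x):=\sum_{n=1}^N a_ne^{2\pi i(x_1n+\cdots+x_dn^d)}$ in $L^p(\mathbb{T}^d)$.

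Next I set $t_n=n/N\in(\fr{n-1}{N},\fr{n}{N}]$ (matching the hypothesis of Lemma~\ref{l:13BDG}) and rescale $y_j=N^jx_j$. Writing $G(y)=\sum_{n=1}^N a_n e^{2\pi i(y_1t_n+\cdots+y_dt_n^d)}$, the change of variables gives
\Bes
\int_{\mathbb{T}^d}|F_+(x)|^p\,dx=\fr{1}{|B^*|}\int_{B^*}|G(y)|^p\,dy,
\Ees
where $B^*=[0,N]\times[0,N^2]\times\cdots\times[0,N^d]$ has volume $|B^*|=N^{d(d+1)/2}$. The key observation is that $G$ is $\Lambda$-periodic for $\Lambda=N\Z\times N^2\Z\times\cdots\times N^d\Z$, with fundamental cell $B^*$. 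For any ball $B_R$ with $R\gc N^d$, I claim $\sum_{k\in\Lambda}w_{B_R}(y'+k)\approx|B_R|/|B^*|$ uniformly in $y'\in B^*$: the $\approx|B_R|/|B^*|$ lattice translates of $y'$ that lie in $B_R$ each contribute at least $2^{-200}$, while the polynomial decay of $w_{B_R}$ renders the tail a convergent Riemann sum. Folding via periodicity then gives
\Bes
\fr{1}{|B_R|}\int_{\R^d}|G(y)|^pw_{B_R}(y)\,dy=\fr{1}{|B_R|}\int_{B^*}|G(y')|^p\sum_{k\in\Lambda}w_{B_R}(y'+k)\,dy'\approx\|F_+\|_{L^p(\mathbb{T}^d)}^p.
\Ees

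Applying Lemma~\ref{l:13BDG} with this choice of $t_n$ and any admissible $R$ now yields $\|F_+\|_{L^p(\mathbb{T}^d)}\lc_\eps(N^\eps+N^{\fr{1}{2}(1-\fr{d(d+1)}{p})+\eps})(\sum_{n=1}^N|a_n|^2)^{1/2}$, and combining with the analogous bound for the negative-index sum and the $n=0$ term proves \eqref{e:13poly}. The only nontrivial ingredient is the periodization identity above: once one checks that $w_{B_R}$ stays above a constant on each lattice translate inside $B_R$ and decays polynomially outside, this is a routine Riemann-sum comparison, and critically it loses no powers of $N$, which is exactly what allows the exponent from Lemma~\ref{l:13BDG} to transfer intact to the torus.
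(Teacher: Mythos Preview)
Your proposal is correct and follows essentially the same route as the paper: split off $n=0$, reduce the negative-index sum to the positive one by the sign change $x_j\mapsto -x_j$ for odd $j$, set $t_n=n/N$, rescale, and use periodicity to pass from the $\R^d$-average of Lemma~\ref{l:13BDG} back to $\mathbb{T}^d$. The only cosmetic difference is that the paper avoids your periodization-of-$w_{B_R}$ step entirely: it simply takes $R=\sqrt{d}\,N^d$, uses $w_{B_R}\gtrsim 1$ on $[0,N^d]^d\subset B_R$ to bound the weighted integral below by the unweighted integral over that cube, and after the same change of variables counts the $N^{d(d-1)/2}$ unit cubes in $[0,N^{d-1}]\times\cdots\times[0,1]$ to recover $\int_{\mathbb{T}^d}$. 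Your two-sided lattice-sum estimate for $\sum_{k\in\Lambda}w_{B_R}(\cdot+k)$ is correct but unnecessary---only the lower bound is needed, and the paper's one-line restriction to a cube supplies it more cheaply.
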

\begin{proof}
We first notice that the function
$$\sum_{|n|\leq N}a_ne^{2\pi i(x_1n+x_2n^2+\cdots+x_dn^d)}$$
is periodic with period $1$ in the variables $x_1,\cdots,x_d$.
By using Minkowski's inequality, making a change of variables and the above periodic fact, one may get
\Bes
\begin{split}
&\ \ \Big(\int_{\mathbb{T}^d}\Big|\sum_{|n|\leq N}a_ne^{2\pi i(x_1n+x_2n^2+\cdots+x_dn^d)}\Big|^pdx\Big)^{\fr{1}{p}}\\
&\leq a_0+\Big(\int_{\mathbb{T}^d}\Big|\sum_{n=1}^{ N}a_ne^{2\pi i(x_1n+x_2n^2+\cdots+x_dn^d)}\Big|^pdx\Big)^{\fr{1}{p}}\\
&\ \ +\Big(\int_{\mathbb{T}^d}\Big|\sum_{n=1}^{ N}a_{-n}e^{2\pi i(x_1n+x_2n^2+\cdots+x_dn^d)}\Big|^pdx\Big)^{\fr{1}{p}}.
\end{split}
\Ees
Hence, to prove \eqref{e:13poly}, it suffices to show that
\Bes
\Big(\int_{\mathbb{T}^d}\Big|\sum_{n=1}^{ N}a_ne^{2\pi i(x_1n+x_2n^2+\cdots+x_dn^d)}\Big|^pdx\Big)^{\fr{1}{p}}
\Ees
has the desired bound.
Applying Lemma \ref{l:13BDG} with $R=\sqrt{d}N^d$, $t_n=\fr{n}{N}$ and $B_R=B(0,R)$ which is centred at $0$, we may obtain
\Be\label{e:13BR}
\begin{split}
&\Big(N^{-d^2}\int\Big|\sum_{n=1}^{ N}a_ne^{2\pi i(x_1\fr{n}{N}+\cdots+x_d(\fr{n}{N})^d)}\Big|^pw_{B_R}(x)dx\Big)^{\fr{1}{p}}\\
&\lc_{\eps}\Big(N^\eps+N^{\fr{1}{2}(1-\fr{d(d+1)}{p})+\eps}\Big)\Big(\sum_{n=1}^{N}|a_n|^2\Big)^{\fr{1}{2}},
\end{split}
\Ee
Since $w_{B_R}(x)\approx 1$ on $B(0,R)$ and $[0,N^d]^d\subset B(0,R)$, the left side of \eqref{e:13BR} is bigger than
$$\Big(N^{-d^2}\int_{[0,N^d]^d}\Big|\sum_{n=1}^{ N}a_ne^{2\pi i(x_1\fr{n}{N}+\cdots+x_d(\fr{n}{N})^d)}\Big|^pdx\Big)^{\fr{1}{p}}$$

By making a change of variables, $x_1=N\alp_1,\cdots,x_d=N^d\alp_d$, the above integral equals to
\Be\label{e:13alp}
\Big(N^{-d^2+\fr{d(d+1)}{2}}\int_{A_N}\Big|\sum_{n=1}^{ N}a_ne^{2\pi i(\alp_1{n}+\cdots+\alp_d{n}^d)}\Big|^pd\alp\Big)^{\fr{1}{p}}
\Ee
where $A_N=[0,N^{d-1}]\times[0,N^{d-2}]\times\cdots\times[0,1]$.
Notice that the function
$$K_N(\alp)=\sum_{n=1}^Na_ne^{2\pi i(\alp_1n+\alp_2n^2+\cdots+\alp_dn^d)}$$
is periodic with period $1$ in the variables $\alp_1,\cdots,\alp_d$. Since $A_N$ has $N^{\fr{d(d-1)}{2}}$ number of unit cubes,
by the periodic fact of $K_N(\alp)$, it follows that \eqref{e:13alp} is equal to
$$\Big(\int_{\mathbb{T}^d}\Big|\sum_{n=1}^{ N}a_ne^{2\pi i(\alp_1{n}+\cdots+\alp_d{n}^d)}\Big|^pd\alp\Big)^{\fr{1}{p}},
$$
which completes the proof.
\end{proof}

Now we begin with the proof of Theorem \ref{t:13}. We first show that the proof can be reduced to the case $p_k=k(k+1)$, that is
\Be\label{e:13cri}
\Big\|\sum_{|n|\leq N}a_ne^{2\pi i(\alp_1n^{k_1}+\cdots+\alp_dn^{k_d})}\Big\|_{L^{p_k}(\mathbb{T}^d)}\lc_{\eps}  N^{\fr{1}{2}(1-\fr{2\mathfrak{K}}{p_k})+\eps}\Big(\sum_{|n|\leq N}|a_n|^2\Big)^{\fr{1}{2}}.
\Ee
Suppose \eqref{e:13cri} is true. Utilizing Cauchy-Schwarz inequality, we get
$$\Big\|\sum_{|n|\leq N}a_ne^{2\pi i(\alp_1n^{k_1}+\cdots+\alp_dn^{k_d})}\Big\|_{L^{\infty}(\mathbb{T}^d)}\lc  N^{\fr{1}{2}}\Big(\sum_{|n|\leq N}|a_n|^2\Big)^{\fr{1}{2}},
$$
By using the Riesz-Thorin interpolation theorem (see for example \cite{Gra249}) to interpolate \eqref{e:13cri} and the above $L^\infty$ estimate, one could easily get the required bound of $L^p$ estimate for $p\geq k(k+1)$ in Theorem \ref{t:13}. Therefore it remains to show \eqref{e:13cri}. Consider positive integers $k_1,\cdots,k_d$ with $1\leq k_1<\cdots<k_d=k$ and denote by $\{l_1,\cdots,l_s\}$ the complement set of $\{k_1,\cdots,k_d\}$ in $\{1,2,\cdots,k\}$. Set $\mathfrak{K}=\sum_{n=1}^dk_n$. Then we may see
\Be
\label{e:13sum}\sum_{i=1}^sl_i=\fr{1}{2}k(k+1)-\mathfrak{K}.
\Ee
Note that $p_k=k(k+1)$ is an even integer, therefore we may set $p_k=2u$. By using the simple fact $\int_0^1e^{2\pi i xy}dy=\del(x)$, here $\del$ is a Dirac measure at $0$, we have
\Be\label{e:13orth1}
\begin{split}
&\ \ \Lam:=\int_{\mathbb{T}^d}\Big|\sum_{|n|\leq N}a_ne^{2\pi i(\alp_1n^{k_1}+\cdots+\alp_dn^{k_d})}\Big|^{2u}d\alp\\
&=\int_{\mathbb{T}^d}\Big(\sum_{|n|\leq N}a_ne^{2\pi i(\alp_1n^{k_1}+\cdots+\alp_dn^{k_d})}\cdot\sum_{|m|\leq N}\overline{a_m}e^{-2\pi i(\alp_1m^{k_1}+\cdots+\alp_dm^{k_d})}\Big)^ud\alp\\
&=\sum_{|n_1|,\cdots,|n_u|\leq N,}\sum_{|m_1|,\cdots,|m_u|\leq N}a_{n_1}\cdots
 a_{n_u}\overline{a_{m_1}}\cdots\overline{a_{m_u}}\\
 &\ \ \ \ \ \ \ \ \ \ \ \ \times\del\Big(\sum_{i=1}^u(n_i^{k_1}-m_i^{k_1})\Big)\cdots\del\Big(\sum_{i=1}^u(n_i^{k_d}-m_i^{k_d})\Big).
\end{split}
\Ee
Thus \eqref{e:13orth1} equals to the number of integral solution of the system of equations
\Be\label{e:13sysequ1}
\left\{
\begin{array}{cccc}
\sum\limits_{i=1}^u(n_i^{k_1}-m_i^{k_1}) =0\\
\cdots\\
\sum\limits_{i=1}^u(n_i^{k_d}-m_i^{k_d}) =0\\
|n_i|\leq N, |m_i|\leq N, i=1,\cdots,u,
\end{array}
\right.
\Ee
with each solution counted with weight $a_{n_1}\cdots
 a_{n_u}\overline{a_{m_1}}\cdots\overline{a_{m_u}}$.

For each solution $(n_1,\cdots,n_u,m_1,\cdots,m_u)$ of \eqref{e:13sysequ1},
there exist integers $h_j,j=1,\cdots,k$, such that
$(n_1,\cdots,n_u,m_1,\cdots,m_u)$ is an integral solution of the following system of equations
\Be\label{e:13sysequ2}
\left\{
\begin{array}{cccc}
\sum\limits_{i=1}^u(n_i-m_i) =h_1\\
\sum\limits_{i=1}^u(n_i^2-m_i^2) =h_2\\
\cdots\\
\sum\limits_{i=1}^u(n_i^{k}-m_i^{k}) =h_k\\
|n_i|\leq N, |m_i|\leq N, i=1,\cdots,u,
\end{array}
\right.
\Ee
where $h_j=0$ if $j=k_i$ for some $i=1,\cdots,d$. By the last condition of \eqref{e:13sysequ2}, it is easy to see that $|h_j|\leq 2uN^j$ for $j=1,\cdots,k$.

On the other hand, for each integral solution $(n_1,\cdots,n_u,m_1,\cdots,m_u)$ of \eqref{e:13sysequ2} with $|h_j|\leq 2uN^j$ for $j=1,\cdots,k$ and $h_j=0$ if $j=k_i$ for some $1\leq i\leq d$, $(n_1,\cdots,n_u,m_1,\cdots,m_u)$ is also an integral solution of \eqref{e:13sysequ1}. Now we define
$$\Lam(h)=\int_{\mathbb{T}^k}\Big|\sum_{|n|\leq N}a_ne^{2\pi i(\alp_1n+\alp_2n^2+\cdots+\alp_kn^k)}\Big|^{2u}e^{2\pi i(-\alp_1h_1-\cdots-\alp_kh_k)}d\alp.$$
By using orthogonality, the above term is equal to
\Bes
\begin{split}
\sum_{|n_1|,\cdots,|n_u|\leq N}&\sum_{|m_1|,\cdots,|m_u|\leq N}a_{n_1}\cdots
 a_{n_u}\overline{a_{m_1}}\cdots\overline{a_{m_u}}\\
&\times\del\Big(\sum_{i=1}^u(n_i-m_i)-h_1\Big)\cdots\del\Big(\sum_{i=1}^u(n_i^{k}-m_i^{k})-h_k\Big),
\end{split}
\Ees
which counts the number of integral solution of \eqref{e:13sysequ2} with each solution counted with weight $a_{n_1}\cdots
 a_{n_u}\overline{a_{m_1}}\cdots\overline{a_{m_u}}$.
Combining above arguments, we conclude that
$$\Lam=\sum_{|h_{l_1}|\leq2uN^{l_1}}\cdots\sum_{|h_{l_s}|\leq2uN^{l_s}}\Lam(h)$$
where $h$ in the sum also satisfies $h_j=0$ if $j=k_i$ for some $i=1,\cdots,d$. Obviously, $|\Lam(h)|\leq\Lam(0)$. Hence we obtain
\Bes
\begin{split}
|\Lam|\leq\sum_{|h_{l_1}|\leq 2uN^{l_1}}\cdots\sum_{|h_{l_s}|\leq 2uN^{l_s}}\Lam(0)&\leq (2u)^sN^{l_1+\cdots+l_s}\Lam(0)\\
&\lc N^{\fr{1}{2}k(k+1)-\mathfrak{K}}N^{p_k\eps}\Big(\sum_{|n|\leq N}|a_n|^2\Big)^{\fr{p_k}{2}},
\end{split}
\Ees
where in the last inequality we use \eqref{e:13sum} and apply Lemma \ref{l:13von} with $p=k(k+1)$. Hence we establish \eqref{e:13cri} which completes the proof of Theorem \ref{t:13}.
\vskip1cm

\section{Sharpness of Theorem \ref{t:13}}\label{s:133}

In this section, we show that $N^{\fr{1}{2}(1-\fr{2\mathfrak{K}}{p})}$ is the best upper bound for $K_{p,d,N}$ when $p\geq 2\mathfrak{K}$. Therefore Theorem \ref{t:13} is sharp up to a factor of $N^\eps$.
\begin{proposition}
Let $K_{p,d,N}$ be defined in \eqref{e:13kpdng}. Suppose $p$ is an even integer. Then there exist constants $C_1$, $C_2$ such that
$$K_{p,d,N}\geq\max\Big\{C_1,C_2N^{\fr{1}{2}(1-\fr{2\mathfrak{K}}{p})}\Big\}.$$
\end{proposition}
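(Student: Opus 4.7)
The plan is to lower-bound $K_{p,d,N}$ by testing the defining inequality
$$\Big\|\sum_{|n|\leq N}a_ne^{2\pi i(\alp_1n^{k_1}+\cdots+\alp_dn^{k_d})}\Big\|_{L^p(\mathbb{T}^d)}\leq K_{p,d,N}\Big(\sum_{|n|\leq N}|a_n|^2\Big)^{\fr{1}{2}}$$
against two particularly simple choices of coefficients $\{a_n\}$, one producing the constant lower bound $C_1$ and the other producing the power $N^{\fr{1}{2}(1-\fr{2\mathfrak{K}}{p})}$.

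For the constant $C_1$, I would take $a_n=\delta_{n,0}$ (or more generally $a_n$ supported at a single point). The left-hand side then equals $1$ and the $\ell^2$ norm on the right equals $1$, so $K_{p,d,N}\geq 1$ and we may set $C_1=1$. Here no dependence on $p$ or $d$ enters, so the bound is robust.

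For the polynomial lower bound, I would choose $a_n\equiv 1$ for all $|n|\leq N$, so that the right-hand side is $K_{p,d,N}(2N+1)^{1/2}\approx K_{p,d,N}N^{1/2}$. To bound the $L^p$ norm from below, I would restrict to the small neighbourhood
$$U=\Big\{\alp\in\mathbb{T}^d:|\alp_i|\leq cN^{-k_i},\ i=1,\ldots,d\Big\}$$
of the origin, where $c$ is a small absolute constant. On $U$ every phase satisfies $|2\pi(\alp_1n^{k_1}+\cdots+\alp_dn^{k_d})|\leq 2\pi dc$, so by choosing $c$ small enough, the real part of each exponential is at least $1/2$ for every $|n|\leq N$, and hence
$$\Big|\sum_{|n|\leq N}e^{2\pi i(\alp_1n^{k_1}+\cdots+\alp_dn^{k_d})}\Big|\geq \fr{2N+1}{2}\gtrsim N$$
pointwise on $U$. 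Since $|U|\approx N^{-\mathfrak{K}}$, this gives
$$\Big\|\sum_{|n|\leq N}e^{2\pi i(\alp_1n^{k_1}+\cdots+\alp_dn^{k_d})}\Big\|_{L^p(\mathbb{T}^d)}\gtrsim N\cdot N^{-\mathfrak{K}/p}.$$
Dividing by $(2N+1)^{1/2}$ yields $K_{p,d,N}\gtrsim N^{1-\mathfrak{K}/p-1/2}=N^{\fr{1}{2}(1-\fr{2\mathfrak{K}}{p})}$, which gives the claim with $C_2$ an absolute constant depending only on $k_1,\ldots,k_d$ and $p$.

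There is no essential obstacle: the proof reduces to the elementary observation that inside a box of sidelengths $N^{-k_i}$ around the origin all phases remain small, hence the exponential sum is concentrated on a set of measure $\approx N^{-\mathfrak{K}}$. The hypothesis that $p$ is even is not used in this approach (one could state the proposition for any $p\geq 2$); it is presumably retained only for consistency with the formulation of Theorem \ref{t:13}. Note also that neither choice of $\{a_n\}$ used $C_1$ with the constant $1$ or $C_2$ sharp — the point is only to produce lower bounds matching the two regimes in the maximum.
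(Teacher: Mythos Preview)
Your argument is correct, and the polynomial lower bound is obtained exactly as in the paper: test with $a_n\equiv1$ and restrict the integral to the box $\{|\alp_i|\lesssim N^{-k_i}\}$ where the exponential sum has size $\gtrsim N$.

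For the constant lower bound $C_1$ the paper proceeds differently: it keeps $a_n\equiv1$ throughout and uses that, for $p=2u$ even, the integral $\Lambda(N,2u)$ counts integer solutions of the system $\sum_i(n_i^{k_j}-m_i^{k_j})=0$, which has at least $(2N+1)^u$ diagonal solutions, yielding $\Lambda(N,2u)\gtrsim N^{p/2}$ and hence $K_{p,d,N}\gtrsim1$. Your choice $a_n=\delta_{n,0}$ is more direct and, as you observe, does not require $p$ to be even; the paper's route explains why that hypothesis appears in the statement. Either approach is sufficient.
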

\begin{proof}
Set $p=2u$. Let $1\leq k_1<k_2<\cdots<k_d$ and $\mathfrak{K}=k_1+\cdots+k_d$. Define
$$\Lam(N,2u)=\int_{\mathbb{T}^d}\Big|\sum_{|n|\leq N}e^{2\pi i(\alp_1n^{k_1}+\cdots+\alp_dn^{k_d})}\Big|^{2u}d\alp.$$
By using orthogonality, $\Lam(N,2u)$ counts the number of integral solution of the following system of equations
\Be\label{e:13sysequ3}
\left\{
\begin{array}{cccc}
\sum\limits_{i=1}^u(n_i^{k_1}-m_i^{k_1}) =0,\\
\cdots\\
\sum\limits_{i=1}^u(n_i^{k_d}-m_i^{k_d}) =0,\\
|n_i|\leq N, |m_i|\leq N, i=1,\cdots,u,
\end{array}
\right.
\Ee
Notice that the system of equations \eqref{e:13sysequ3} has $(2N+1)^u$ number of trivial solutions. In fact, for each $(n_1,\cdots,n_u)$ with $|n_i|\leq N$, $i=1,2,\cdots,u$, one may choose $(m_1,\cdots,m_u)=(n_1,\cdots,n_u)$. Hence we have
\Be\label{e:13upsolu1}
\Lam(N,2u)\geq CN^{\fr{p}{2}}.
\Ee

Define the set $\Om_N$ as
$$\Om_N=\Big\{\alp\in\mathbb{T}^d:|\alp_i|\leq\fr{1}{8dN^{k_i}},i=1,\cdots,d\Big\}.$$
Then we have $|\Om_N|\approx N^{-\mathfrak{K}}$. If $\alp\in\Om_N$ and $|n|\leq N$, then
\Bes
\begin{split}
\Big|\sum_{|n|\leq N}e^{2\pi i(\alp_1n^{k_1}+\cdots+\alp_dn^{k_d})}\Big|&\geq\Big|Re\sum_{|n|\leq N}e^{2\pi i(\alp_1n^{k_1}+\cdots+\alp_dn^{k_d})}\Big|\\
&\geq\sum_{|n|\leq N}\cos(2\pi(\alp_1n^{k_1}+\cdots+\alp_dn^{k_d}))\geq CN.
\end{split}
\Ees
Now we conclude that
\Be\label{e:13upsolu2}
\Lam(N,2u)\geq\int_{\Om_N}\Big|\sum_{|n|\leq N}e^{2\pi i(\alp_1n^{k_1}+\cdots+\alp_dn^{k_d})}\Big|^{2u}d\alp\geq CN^{p}|\Om_N|\geq CN^{p-\mathfrak{K}}.
\Ee
Recall $K_{p,d,N}$ is the best constant for the following inequality
$$\Big\|\sum_{|n|\leq N}a_ne^{2\pi i(\alp_1n^{k_1}+\cdots+\alp_dn^{k_d})}\Big\|_{L^p(\mathbb{T}^d)}\leq K_{p,d,N}\Big(\sum_{|n|\leq N}|a_n|^2\Big)^{\fr{1}{2}}.
$$
Choosing $a_n=1$ for all $|n|\leq N$, then we have $K_{p,d,N}\geq N^{-\fr{1}{2}}(\Lam(N,p))^{\fr{1}{p}}$.
Combining the estimates \eqref{e:13upsolu1} and \eqref{e:13upsolu2}, we may get $$K_{p,d,N}\geq\max\Big\{C_1,C_2N^{\fr{1}{2}(1-\fr{2\mathfrak{K}}{p})}\Big\}$$ which completes the proof.
\end{proof}

\subsection*{Acknowledgement}
The authors would like to express their deep gratitude to the referee for his/her very careful reading and valuable suggestions.

\vskip1cm

\bibliographystyle{amsplain}

\begin{thebibliography}{10}
\bibitem{BCT06}J. Bennett, A. Carbery and T. Tao, \textit{On the multilinear restriction and Kakeya conjectures}. Acta Math. \textbf{196}(2006), no. 2, 261-302.

\bibitem {Bou93} J. Bourgain, \textit{Fourier transform restriction phenomena for certain lattice subsets and applications to nonlinear evolution equations, II: The KdV-equation}, Geom. Funct. Anal. \textbf{3} (1993) No.3, 209-262.
\bibitem{BD15}J. Bourgain and C. Demeter, \textit{The proof of the $l^2$ decoupling conjecture}. Ann. of Math. \textbf{182}(2015), no. 1, 351-389.

\bibitem{BDG16} J. Bourgain, C. Demeter and L. Guth, \textit{ Proof of the main conjecture in Vinogradov's mean value theorem for degrees higher than three}. Ann. of Math. (2) \textbf{184} (2016), no. 2, 633-682.

\bibitem{BG11}J. Bourgain and L. Guth, \textit{Bounds on oscillatory integral operators based on multilinear estimates}. Geom. Funct. Anal. \textbf{21}(2011), no. 6, 1239-1295.

\bibitem {Gra249}L. Grafakos, \textit{Classic Fourier Analysis}, Graduate Texts in Mathematics, Vol. \textbf{249} (Third edition), Springer, New York, 2014.

\bibitem {HL13} Y. Hu and X. Li, \textit{
Discrete Fourier restriction associated with KdV equations}.
Anal. PDE \textbf{6} (2013), no. 4, 859-892.

\bibitem {HL15} Y. Hu and X. Li, \textit{
Local well-posedness of periodic fifth-order KdV-type equations. }
J. Geom. Anal. \textbf{25} (2015), no. 2, 709-739.

\bibitem{Woo16}T. Wooley, \textit{Discrete Fourier restriction via Efficient Congruencing}. Int. Math. Res. Notices. (2017), no. 5, 1342-1389.

\bibitem {Ste93}E. M. Stein, \textit{Harmonic analysis: real-variable methods, orthogonality and oscillatory integrals}, Princeton Univ. Press, Princeton, NJ, 1993.

\bibitem{Tao04}T. Tao, \textit{Some recent progress on the restriction conjecture.} Fourier analysis and convexity, 217-243, Appl. Numer. Harmon. Anal., Birkh\"{a}user Boston, Boston, MA, 2004.

\end{thebibliography}

\end{document}